\newtheorem{theorem}{Theorem}[section]
\newtheorem{proposition}{Proposition}[section]
\theoremstyle{definition}
\theoremstyle{remark}
\numberwithin{equation}{section}
\newcommand{\al}{\alpha}
\newcommand{\si}{\sigma}
\newcommand{\Z}{\mathbb Z}
\newcommand{\C}{\mathbb C}
\newcommand{\g}{\mathfrak g}
\newcommand{\h}{\mathfrak h}
\newcommand{\cA}{\mathcal A}
\newcommand{\cK}{\mathcal K}
\newcommand{\A}{A_{2n-1}}
\newcommand{\D}{D_{n+1}}
\newcommand{\At}{A_{2n}}
\newcommand{\Df}{D_4}
\newcommand{\dzmw}{\delta (z - w)}
\def\dsum{\displaystyle\sum}
\def\a{\alpha}
\begin{document}

\title[Bosonic free field realization of twisted 2-toroidal Lie algebras]
{Bosonic free field realization of twisted 2-toroidal Lie algebras}
\author{Chad R. Mangum}
\address{Department of Mathematics,
   Niagara University,
   Lewiston, NY 14109, USA}
\email{cmangum@niagara.edu}
\keywords{Lie algebras, toroidal Lie algebras, Dynkin diagram automorphisms, bosonic representation}
\subjclass{Primary: 17B67}

\begin{abstract}
Using free fields, we construct a bosonic realization of toroidal Lie algebras of type $\A, \At, \D, \Df$ which are twisted by a Dynkin diagram automorphism. This realization is based upon the recently found Moody-Rao-Yokonuma-like presentation. In particular, our construction contains the first bosonic realization of the twisted affine algebra $\Df^{(3)}$.
\end{abstract}

\maketitle

\section{Introduction} \label{intro}
Since their introduction in the late 1960s, affine (Kac-Moody) algebras \cite{K1, C} have played a significant role in various areas of mathematics and physics, and their representation theory is now well-developed. Many generalizations of affine algebras have been studied, notably double affine Lie algebras \cite{CL}, extended affine Lie algebras \cite{ABGP, NSW} and ($\ell$-)toroidal (Lie) algebras, $\ell \in \Z_{> 0}$.

The $\ell$-toroidal algebras can be realized as the universal central extension of a multi-loop algebra in $\ell$ variables with infinite dimensional center \cite{BK} for $\ell > 1$ (the case $\ell = 1$ gives precisely the affine algebras). Like affine algebras \cite{F3}, toroidal algebras have found applications in physics \cite{B} and also come in both untwisted and twisted types. Much progress has been made on the representation theory in the untwisted case, in particular through the use of vertex operators \cite{BB, BBS, FJW, T1, T2, FM} (see \cite{FLM} for a thorough study of vertex operator algebras). An expository survey can be found in \cite{R}.

A major step forward in the study of toroidal algebras came in \cite{MRY} when Moody, Rao, and Yokonuma gave an alternate presentation of the untwisted types when $\ell = 2$. This presentation is analogous to that of quantum affine algebras given in \cite{D}. It was subsequently generalized in \cite{EM} for an arbitrary positive integer $\ell$.

A few notions of twisted toroidal algebras have been studied \cite{BR1, BR2, FJ, V} with differing restrictions on the twisting automorphism(s); some of these notions are special cases of others. The current work focuses on the twisted toroidal algebras studied in \cite{FJ} which can be viewed as fixed point subalgebras of untwisted toroidal algebras with respect to a Dynkin diagram automorphism (the Dynkin diagram automorphism on a finite type Kac-Moody algebra is extended in a natural way). Only recently has an MRY-like presentation of these algebras been given \cite{JMM, JMM2}.

Two important classes of representations of affine and toroidal algebras are the fermionic and bosonic representations; they view the algebra elements as quadratic operators on a Clifford or Weyl module, respectively, and are distinguished by whether the operators anticommute or commute, respectively. The operators are known as free fields and can be studied through the tools of vertex algebras \cite{K2}. Their study in regards to affine algebras include such seminal works as \cite{KP, F1, F2} and, most notably for the current paper, \cite{FF}. Since that time, numerous authors have used similar techniques to give representations of various affine \cite{KV1, KV2}, toroidal \cite{JM, JMT, JMX, JX}, and related algebras \cite{L, G}. 

In the recent \cite{JMM2}, the aforementioned MRY-like presentation was used to give fermionic representations of the twisted toroidal algebras of \cite{FJ} with $\ell = 2$ of types $\A, \At, \D, \Df$, using similar techniques to \cite{FF, JM}. It included a fermionic representation of the twisted affine algebra $\Df^{(3)}$ for the first time (cf. \cite{F1, FF, KP}). The main result of the current work is to give bosonic representations of the same algebras studied in \cite{JMM2}. In so doing, the current work also contains the first bosonic free field representation of $\Df^{(3)}$.

The paper is organized as follows. In Section \ref{twtorsec} we recall the twisted 2-toroidal Lie algebras of \cite{FJ}, followed by the MRY-like presentation of \cite{JMM2} in Section \ref{MRYPresn}. In Section \ref{bosrepn} we give the main result: the construction of the bosonic free field representations of these algebras.

Throughout this paper, all algebras, spans, tensor products, vector spaces, etc. will be over the field of complex numbers $\C$ unless indicated otherwise.

\section{Twisted 2-Toroidal Lie Algebras}\label{twtorsec}

We begin by recalling the construction of the twisted 2-toroidal Lie algebras \cite{FJ}.

Let $\g$ be the finite dimensional simple Lie algebra of type $A_{2n-1} , (n \geq 3)$, $A_{2n} , (n \geq 2)$, $D_{n+1} ,  (n \geq 2)$, or $D_4$. \footnote{The algebras denoted $D_{n+1}$ for $n=3$ and $D_4$ will be distinguished by their respective values for $r$ defined below; $r=2$ in the former, and $r=3$ in the latter.} The Chevalley generators of $\g$ will be denoted by $\{e_i', f_i', h_i' \mid 1 \leq i \leq N\}$ where $N = 2n - 1, 2n, n+1, 4,$ respectively; $\h' = \text{span}\{h_i' \mid 1 \leq i \leq N\}$ is the Cartan subalgebra of $\g$. Denote the simple roots by $\{\alpha_i' \mid 1 \leq i \leq N\} \subset \h'^*$, the set of roots of $\g$ by $\Delta$, and the root lattice by $Q$. Then $\al_j'(h_i') = a_{ij}'$ where $A' = (a_{ij}')_{i,j = 1}^N$ is the Cartan matrix associated with $\g$.

It is well-known \cite{K1} that $\g$ possesses a nondegenerate symmetric invariant bilinear form which will be denoted $( \cdot | \cdot )$. For $\g$ of type $A_{2n-1}$, $A_{2n}$, $D_{n+1}$, or $D_4$, this form can be realized by defining $( x | y ) = tr(xy), tr(xy),  \frac{1}{2}tr(xy), \frac{1}{2}tr(xy)$, respectively, for all $x, y \in \g$. Then $( h_i' | h_i' ) = 2$ with $1 \leq i \leq N$. Since the Lie algebra $\g$ is simply-laced, we can identify the invariant form on $\h'$ to that on the dual space $\h'^*$ and normalize the inner product by $( \al | \al ) = 2, \al \in \Delta$.

Let $\si$ be the Dynkin diagram automorphism of order $r = 2, 2, 2, 3$ respectively, defined as follows:
\begin{eqnarray*}
&&\sigma(h_i')=h_{N-i+1}', i=1, \cdots , N, \ \ \mbox{for type } \ A_{2n-1}  \ \text{or} \ A_{2n}. \nonumber \\
&&\sigma(h_i')=h_i', i=1, \cdots , N-2; \sigma(h_n')=h_{n+1}',
\ \ \mbox{for type} \ D_{n+1}. \nonumber\\
&&\sigma(h'_1, h'_2, h'_3, h'_4)=(h'_3, h'_2, h'_4, h'_1)\ \ \mbox{for type} \
D_4.\nonumber
\end{eqnarray*}
Let $\sigma$ act on $\{ 1, 2, \ldots , N \}$ analogously: $\sigma(i)=j \Leftrightarrow \sigma(h_i')=h_j'$.
Then $\g$ can be decomposed as a ${\Z}/r{\Z}$-graded Lie algebra:
\begin{equation}\label{findimldecomp}
{\g}={\g}_0\oplus \cdots\oplus{\g}_{r-1},
\end{equation}
where ${\g}_i=\{ x\in {\g} \mid \sigma(x)=\omega^i x\}$ and $\omega=e^{2\pi\sqrt{-1}/r}$. It is known (see \cite[Proposition 8.3]{K1}) that the fixed point subalgebra ${\g}_0$ is the simple Lie algebra of type $C_n$, $B_n$, $B_n$, $G_2$ respectively. Let $I = \{1, 2, \cdots , n\}$ for each $\g$, noting that $n=2$ for $\g = D_4$.
The Chevalley generators $\{e_i, f_i, h_i \mid i \in I\}$ of $\g_0$ are given by:
\begin{eqnarray*}
&&e_i=e_i', f_i=f_i', h_i=h_i', \ \mbox{if } \sigma(i)=i; \\
&&e_i=\sum_{j=0}^{r-1}e'_{\sigma^j(i)}, \
f_i=\sum_{j=0}^{r-1}f'_{\sigma^j(i)}, \
h_i=\sum_{j=0}^{r-1}h'_{\sigma^j(i)}, \ \mbox{if } \sigma(i)\neq i, \text{ except } i=n \text{ for } A_{2n}. \\
&&e_n=\sqrt{2}(e'_n+e'_{n+1}),
f_n=\sqrt{2}(f'_n+f'_{n+1}),
h_n=2(h'_n+h'_{n+1}) \ \mbox{for } A_{2n}.
\end{eqnarray*}

The Cartan subalgebra of $\g_0$ is $\h_0 = \text{span}\{h_i \mid i \in I\}$ and the simple roots $\{\al_i \mid i \in I\}\subset \h_0^*$ are given by:
$$
\al_i = \frac{1}{r}\sum_{j=0}^{r-1}\al'_{\si^{j}(i)}.
$$

Let $A = (a_{ij})_{i,j \in I}$ be the Cartan matrix for $\g_0$. Then we have
\begin{equation}
(\alpha_i|\alpha_j)=d_ia_{ij},
\ \ \mbox{for all} \ i,j\in I
\end{equation}
where
\begin{equation}\label{symmconst}
\displaystyle (d_1, \cdots , d_n) =
\begin{cases}
 \left(\frac{1}{2}, \cdots , \frac{1}{2}, 1 \right) & (A_{2n-1}); \\
 \left(\frac{1}{2}, \cdots , \frac{1}{2}, \frac{1}{4} \right) & (A_{2n}); \\
 \left(1, \cdots, 1, \frac{1}{2} \right) & (D_{n+1}); \\
 \left(\frac{1}{3}, 1 \right) & (D_4).
\end{cases}
\end{equation}


Denote
\begin{center}
$\theta^0 =
\begin{cases}
\a'_1+\cdots+\a'_{2n-2}  & (A_{2n}); \\
\a'_1+\cdots+\a'_{2n} & (A_{2n}); \\
\a'_1+\a'_2+\cdots+\a'_n  & (D_{n+1}); \\
\a'_1+\a'_2+\a'_3 & (D_4).
\end{cases}$
\end{center}
Note that $\theta^0 \in \Delta$ for $\g$. Let $\{ e'_{\theta^0}, f'_{\theta^0}, h'_{\theta^0} \}$ be the $\mathfrak{sl}_2$-triplet associated to $\theta^0$ having bracket $[h'_{\theta^0}, e'_{\theta^0}] = 2 e'_{\theta^0}, [h'_{\theta^0}, f'_{\theta^0}] = -2 f'_{\theta^0}$ and $h'_{\theta^0} = [e'_{\theta^0} , f'_{\theta^0}]$.

Define $\tilde{I} = I \cup \{0\}$ and extend the Cartan matrix $A = (a_{ij})_{i,j \in I}$ for $\g_0$ to the Cartan matrix $\tilde{A} = (a_{ij})_{i,j \in \tilde{I}}$ for the twisted affine algebra $\hat{\g}$, where $\hat{\g}$ is of type $A_{2n-1}^{(2)}, A_{2n}^{(2)}, D_{n+1}^{(2)}, D_4^{(3)}$, respectively, when $\g$ is of type $A_{2n-1}, A_{2n}, D_{n+1}, D_4$, respectively. Let $\{ \alpha_i \mid i \in \tilde{I} \}$, $\hat{Q}$, $\delta$, and $\hat{\Delta}$ denote the simple roots, root lattice, null root, and set of roots, respectively, for $\hat{\g}$.

Let $\cA = \C[s, s^{-1}, t, t^{-1}]$ be the ring of Laurent polynomials in the commuting variables $s, t$ and $L(\g) = \g \otimes \cA$  be the multi-loop algebra with the following Lie bracket:

$$
[x \otimes s^j t^k , y \otimes s^l t^m] = [x, y] \otimes s^{j+l}t^{k+m},
$$
for all $x, y \in \g$, and $j, k, l, m \in \Z$. For $j \in \Z$ we define $0 \leq \overline{j} < r$ such that $j \equiv \overline{j} \ \mbox{mod} \ r$, and define $\g_j = \g_{\overline{j}}$. We extend the automorphism $\si$ of $\g$ to an automorphism $\overline{\si}$ of $L(\g)$ by defining:
$$
\overline\si(x \otimes s^j t^k) = \omega^{-k} \si(x) \otimes s^j t^k
$$
where $x \in \g, j, k \in \Z$. We denote by $L(\g, \si)$ the fixed point subalgebra of $L(\g)$ with respect to $\overline\si$. Note that $L(\g, \si)$ is $\Z$-graded as follows:
$$
\displaystyle L(\g, \si) = \bigoplus_{m \in \Z} L(\g, \si)_m,
$$
where $L(\g, \si)_m = \g_m \otimes \cA_m, \cA_m = \mbox{span}\{s^j t^m \mid j \in \Z\}=t^m\C[s, s^{-1}]$.

Set $F = \cA \otimes \cA$. Then the action $a(b_1 \otimes b_2) = ab_1 \otimes b_2 = (b_1 \otimes b_2)a$ for all $a, b_1, b_2 \in \cA$ gives $F$ the structure of a two-sided $\cA$-module. Let $G$ be the $\cA$-submodule of $F$ generated by $\{1 \otimes ab -a\otimes b -b \otimes a \mid a, b \in \cA\}$. The $\cA$- quotient module $\Omega_{\cA} = F/G$ is called the $\cA$ - module of K{\"a}hler differentials, with the differential map being the canonical quotient map $d : \cA \longrightarrow \Omega_{\cA}$ given by $da = (1 \otimes a) + G$ for $a \in \cA$. Let $\cK' = \Omega_{\cA}/d\cA$ and let $- : \Omega_{\cA} \longrightarrow \cK'$ be the canonical linear map. Since $\overline{d(ab)} = 0$, we have $\overline{a(db)} = - \overline{(da)b} = - \overline{b(da)}$ for all  $a, b \in \cA$. Then $\cK' = span\{\overline{bda} \mid a, b \in \cA\}$. Set $\cK = span\{\overline{bda} \mid a \in \cA_k, b \in \cA_l , k+l \equiv 0 \text{ (mod } r)\}$ which is a subalgebra of $\cK'$. We note that $\{\overline{s^{j-1}t^mds}, \overline{s^jt^{-1}dt}, \overline{s^{-1}ds}\mid j \in \Z, m \in \Z_{\neq 0}\}$ is a basis for $\cK$ and the following relations can be checked by direct computation.
\begin{equation}\label{kahlercalc}
\begin{split}
\overline{s^{\ell}ds^k} & = \delta_{k, -\ell} k \overline{s^{-1}ds}, \\
\overline{s^{\ell}t^{-1}d(s^{k}t)} & = \delta_{k, -\ell} k \overline{s^{-1}ds} + \overline{s^{k + \ell}t^{-1}dt}.
\end{split}
\end{equation}

Define
$$
T(\g) = L(\g , \si) \oplus \cK,
$$
with the Lie bracket given by
\begin{equation*}
[x\otimes a, y\otimes b] = [x, y]\otimes ab + (x | y)\overline{bda},
\end{equation*}
and $\cK$ central, where $x \in \g_i, y \in \g_j, a \in \cA_i, b \in \cA_j$ for $i, j \in \Z$. It is shown in \cite[Theorem 2.1]{FJ} that $T(\g)$ with the canonical projection map $\eta : T(\g) \rightarrow L(\g, \si)$ is the universal central extension of $L(\g, \si)$. $T(\g)$ is called the twisted 2-toroidal Lie algebra of type $\g$.

The elements $c_0 = \overline{s^{-1}ds}, c_1= \overline{t^{-1}dt} \in \cK$ are called the degree zero central elements. A representation of $T(\g)$ is said to be of level $(k_0, k_1)$ for $k_0, k_1 \in \C$ if $c_0$ acts as $k_0(id)$ and $c_1$ acts as
$k_1(id)$.

\section{MRY-Like Presentation of Twisted 2-Toroidal Algebras}\label{MRYPresn}

In \cite{JMM2}, another presentation of $T(\g)$ is given which is similar to that in \cite{MRY} of \emph{untwisted} 2-toroidal Lie algebras; this presentation is in turn based upon the realization of quantum affine algebras given in \cite{D}. Here we recall the essentials of this construction (for further details, consult \cite{JMM2}).

Let $t(\g)$ be the Lie algebra generated by symbols
$$\not{c},
 \al_m(k) \text{ and } X(\pm \al_m, k),$$
with $m \in \tilde{I}$ and $k \in \Z$, and satisfying the following relations:
\begin{enumerate}
  \item $[\al_0(k), \al_0(l)] =
    \begin{cases}
      2r k \delta_{k,-\ell} \not{c} & (A_{2n-1}, D_{n+1}, D_4); \\
      2k \delta_{k,-\ell} \not{c} & (A_{2n}).
    \end{cases}$ \\
  \item $[\al_i(k), \al_j(l)] =
    \begin{cases}
      r a_{ij} k \delta_{k,-\ell} \not{c} & (A_{2n-1}, A_{2n}, D_4); \\
      a_{ij} k \delta_{k,-\ell} \not{c} & (D_{n+1})
    \end{cases}$ \\
where $i \in \tilde{I}, j \in I$ with $i \leq j$ and $(i, j) \neq (n-1, n), (n, n)$.
  \item $[\al_i(k), \al_j(l)] =
    \begin{cases}
      a_{ij} k \delta_{k,-\ell} \not{c} & (A_{2n-1}, D_4); \\
      4 a_{ij} k \delta_{k,-\ell} \not{c} & (A_{2n}); \\
      2 a_{ij} k \delta_{k,-\ell} \not{c} & (D_{n+1})
    \end{cases}$ \\
where $(i, j) = (n-1, n)$ or $(i, j)= (n, n)$.
 \item $[\al_i(k), X(\pm \al_j, l)] =
      \pm a_{ij} X(\pm \al_j, k+l)$
where $i,j \in \tilde{I}$.
 \item $[X(\pm \al_i, k) , X(\pm \al_i, l) ] = 0 $
where $i \in \tilde{I}$.
 \item $[X(\al_i, k) , X(-\al_j, l) ] = \\
    \begin{cases}
      \delta_{i,j} \big\{ \al_i(k+l) + \big(r - \delta_{i,n} (r-1) \big) k \delta_{k,-\ell} \not{c} \big \} & (A_{2n-1}, D_4); \\
      \delta_{i,j} \big\{ \al_i(k+l) + \big( r \left(1+\delta_{i,n}(r-1) \right) - \delta_{i,0}(r-1) \big) k \delta_{k,-\ell} \not{c} \big \} & (A_{2n}); \\
      \delta_{i,j} \big\{ \al_i(k+l) + \big(1 + (\delta_{i,0}+\delta_{i,n}) (r-1) \big) k \delta_{k,-\ell} \not{c} \big \} & (D_{n+1})
    \end{cases}$  \\
where $i,j \in \tilde{I}$.
 \item $\text{ad} X(\pm \al_i , k_2) X(\pm \al_j , k_1) = 0$ for $i,j \in \tilde{I}$ with $i \neq j$ and $a_{ij} = 0$.
 \item $\text{ad} X(\pm \al_i, k_3) \text{ad} X(\pm \al_i , k_2) X(\pm \al_j , k_1) = 0$ for $i,j \in \tilde{I}$ with $i \neq j$ and $a_{ij} = -1$.
 \item $\text{ad} X(\pm \al_i, k_4) \text{ad} X(\pm \al_i , k_3) \text{ad} X(\pm \al_i , k_2) X(\pm \al_j , k_1) = 0$ for $i,j \in \tilde{I}$ with $i \neq j$ and $a_{ij} = -2$.
 \item $\text{ad} X(\pm \al_i, k_5) \text{ad} X(\pm \al_i, k_4) \text{ad} X(\pm \al_i , k_3) \text{ad} X(\pm \al_i , k_2) X(\pm \al_j , k_1) = 0$ for $i,j \in \tilde{I}$ with $i \neq j$ and $a_{ij} = -3$.
\end{enumerate}
In addition, $\not{c}$ is central.

Now let $z,w$ be formal variables. We define formal power series with coefficients from $t(\g)$:
 $$
 \al_i(z)=\sum_{k\in \Z}\al_i(k)z^{-k-1},
\qquad X(\pm \al_i,z)=\sum_{k\in \Z}X(\pm \al_i, k)z^{-k-1},
 $$
for $i \in \tilde{I}$.
We will use the delta function $\delta(z-w)=\sum_{k\in\Z}w^k z^{-k-1}$. Since $\frac1{z-w} = \sum_{k=0}^{\infty} z^{-k-1} w^k$ in the region $|z|>|w|$, then:
\begin{align*}
\delta(z-w)&=\iota_{z, w}\left(\frac{1}{z-w} \right)+\iota_{w, z}\left( \frac{1}{w-z} \right),\\
\partial_w\delta(z-w)&=
\iota_{z, w}\left( \frac{1}{(z-w)^{2}} \right)-\iota_{w, z}\left( \frac{1}{(w-z)^{2}} \right),
\end{align*}
where $\iota_{z, w}$ means the expansion in the region $|z|>|w|$. For simplicity in the following we will drop $\iota_{z, w}$ if it is clear from context.

We may write the above defining relations of $t(\g)$ as power series by collecting components as follows.
\begin{equation}\label{twrelationsps}
\textbf{Relations of } t(\g) \textbf{ as Power Series}
\end{equation}
\begin{enumerate}
  \item $[\al_0(z), \al_0(w)] =
    \begin{cases}
      2r \partial_w\delta(z-w) \not{c} & (A_{2n-1}, D_{n+1}, D_4); \\
      2 \partial_w\delta(z-w) \not{c} & (A_{2n}).
    \end{cases}$ \\
  \item $[\al_i(z), \al_j(w)] =
    \begin{cases}
      r a_{ij} \partial_w\delta(z-w) \not{c} & (A_{2n-1}, A_{2n}, D_4); \\
      a_{ij} \partial_w\delta(z-w) \not{c} & (D_{n+1})
    \end{cases}$ \\
where $i \in \tilde{I}, j \in I$ with $i \leq j$ and $(i, j) \neq (n-1, n), (n, n)$.
  \item $[\al_i(z), \al_j(w)] =
    \begin{cases}
      a_{ij} \partial_w\delta(z-w) \not{c} & (A_{2n-1}, D_4); \\
      4 a_{ij} \partial_w\delta(z-w) \not{c} & (A_{2n}); \\
      2 a_{ij} \partial_w\delta(z-w) \not{c} & (D_{n+1})
    \end{cases}$ \\
where $(i, j) = (n-1, n)$ or $(i, j)= (n, n)$.
 \item $[\al_i(z), X(\pm \al_j, w)] =
      \pm a_{ij} X(\pm \al_j, z) \delta(z-w)$
where $i,j \in \tilde{I}$.
 \item $[X(\pm \al_i, z) , X(\pm \al_i, w) ] = 0 $
where $i \in \tilde{I}$.
 \item $[X(\al_i, z) , X(-\al_j, w) ] = \\
    \begin{cases}
      \delta_{i,j} \big\{ \al_i(z) \delta(z-w) + \big(r - \delta_{i,n} (r-1) \big) \partial_w\delta(z-w) \not{c} \big \} \qquad (A_{2n-1}, D_4); \\
      \delta_{i,j} \big\{ \al_i(z) \delta(z-w) + \big( r \left(1+\delta_{i,n}(r-1) \right) - \delta_{i,0}(r-1) \big) \partial_w\delta(z-w) \not{c} \big \} \, (A_{2n}); \\
      \delta_{i,j} \big\{ \al_i(z) \delta(z-w) + \big(1 + (\delta_{i,0}+\delta_{i,n}) (r-1) \big) \partial_w\delta(z-w) \not{c} \big \} \qquad (D_{n+1})
    \end{cases}$  \\
where $i,j \in \tilde{I}$.
 \item $\text{ad} X(\pm \al_i , z_2) X(\pm \al_j , z_1) = 0$ for $i,j \in \tilde{I}$ with $i \neq j$ and $a_{ij} = 0$.
 \item $\text{ad} X(\pm \al_i, z_3) \text{ad} X(\pm \al_i , z_2) X(\pm \al_j , z_1) = 0$ for $i,j \in \tilde{I}$ with $i \neq j$ and $a_{ij} = -1$.
 \item $\text{ad} X(\pm \al_i, z_4) \text{ad} X(\pm \al_i , z_3) \text{ad} X(\pm \al_i , z_2) X(\pm \al_j , z_1) = 0$ for $i,j \in \tilde{I}$ with $i \neq j$ and $a_{ij} = -2$.
 \item $\text{ad} X(\pm \al_i, z_5) \text{ad} X(\pm \al_i, z_4) \text{ad} X(\pm \al_i , z_3) \text{ad} X(\pm \al_i , z_2) X(\pm \al_j , z_1) = 0$ for $i,j \in \tilde{I}$ with $i \neq j$ and $a_{ij} = -3$.
\end{enumerate}

For any power series $a(z) = \sum_{k \in \Z} a(k) z^{-k-1}$, note that $a(z) \delta(z-w) = a(w) \delta(z-w)$. Indeed, $a(z) \delta(z-w) = \sum_{k \in \Z} a(k) z^{-k-1} \sum_{\ell \in \Z} w^{\ell} z^{-\ell-1} =\\ \sum_{k,\ell \in \Z} a(k) z^{-k-\ell-2} w^{\ell}$ and setting $k'=k+\ell+1$ gives $\sum_{k,k' \in \Z} a(k) z^{-k'-1} w^{k'-k-1} = \sum_{k \in \Z} a(k) w^{-k-1} \sum_{k' \in \Z} w^{k'} z^{-k'-1} =  a(w) \delta(z-w)$.

Define a map $\pi : t(\g) \longrightarrow L(\g, \si)$ as follows. In types $A_{2n-1}, D_{n+1}, D_4$,
\begin{equation*}\label{piADD}
\begin{cases}
 \not{c} \mapsto 0; \\
 \al_{0} (k) \mapsto \sum_{p=0}^{r-1} \sigma^{p} (-h'_{\theta^0}) \otimes s^{k}; \\
 \al_{i} (k) \mapsto \left(1 - \delta_{i,\sigma(i)} \left(1-\frac{1}{r} \right) \right) \sum_{p=0}^{r-1} \sigma^{p} (h'_{i}) \otimes s^{k}; \\
 X(\al_{0}, k) \mapsto \sum_{p=0}^{r-1} - \sigma^{p} (\omega^{r-p} f'_{\theta^0}) \otimes (s^{k} t); \\
 X(-\al_{0}, k) \mapsto \sum_{p=0}^{r-1} - \sigma^{p} (\omega^{p} e'_{\theta^0}) \otimes (s^{k} t^{-1}); \\
 X(\al_{i}, k) \mapsto \left(1 - \delta_{i,\sigma(i)} \left(1-\frac{1}{r} \right) \right) \sum_{p=0}^{r-1} \sigma^{p} (e'_{i}) \otimes s^{k}; \\
 X(-\al_{i}, k) \mapsto \left(1 - \delta_{i,\sigma(i)} \left(1-\frac{1}{r} \right) \right) \sum_{p=0}^{r-1} \sigma^{p} (f'_{i}) \otimes s^{k}.
\end{cases}
\end{equation*}

In type $A_{2n}$,
\begin{equation*}\label{piA2}
\begin{cases}
 \not{c} \mapsto 0; \\
 \al_{0} (k) \mapsto -h'_{\theta^0} \otimes s^{k}; \\
 \al_{i} (k) \mapsto \left(1 + \delta_{i,n} \right) \sum_{p=0}^{r-1} \sigma^{p} (h'_{i}) \otimes s^{k}; \\
 X(\al_{0}, k) \mapsto - f'_{\theta^0} \otimes (s^{k} t); \\
 X(-\al_{0}, k) \mapsto - e'_{\theta^0} \otimes (s^{k} t^{-1}); \\
 X(\al_{i}, k) \mapsto \left(1 + \delta_{i,n} \left(\sqrt{2}-1 \right) \right) \sum_{p=0}^{r-1} \sigma^{p} (e'_{i}) \otimes s^{k}; \\
 X(-\al_{i}, k) \mapsto \left(1 + \delta_{i,n} \left(\sqrt{2}-1 \right) \right) \sum_{p=0}^{r-1} \sigma^{p} (f'_{i}) \otimes s^{k}.
\end{cases}
\end{equation*}

The following proposition (see \cite[Theorem 3.3]{JMM2}) shows that $t(\g)$ is a realization of the twisted toroidal Lie algebra $T(\g)$. We will call it the MRY-like presentation of $T(\g)$.

\begin{proposition}\label{MRYJMM2}
The pair $(t(\g), \pi)$ is the universal central extension of $L(\g, \si)$. Hence $t(\g) \cong T(\g)$.
\end{proposition}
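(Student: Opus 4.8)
The plan is to realize $(t(\g),\pi)$ as a central extension of $L(\g,\si)$ and compare it with the universal central extension $(T(\g),\eta)$ furnished by \cite[Theorem 2.1]{FJ}; since universal central extensions are unique up to canonical isomorphism, showing the comparison map is an isomorphism proves both assertions at once. Concretely, I would produce a surjective homomorphism $\phi:T(\g)\to t(\g)$ covering the identity of $L(\g,\si)$ together with a homomorphism $\psi:t(\g)\to T(\g)$ satisfying $\eta\circ\psi=\pi$, and then deduce $\psi\circ\phi=\mathrm{id}_{T(\g)}$ from the uniqueness clause of the universal property. This forces $\phi$ to be injective, hence an isomorphism, and identifies $t(\g)$ with $T(\g)$.

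First I would verify that $\pi$ is a well-defined surjective homomorphism, that $t(\g)$ is perfect, and that $\ker\pi$ is central, so that $(t(\g),\pi)$ is genuinely a central extension. Well-definedness amounts to checking relations (1)--(10) on the images: the Serre-type relations (7)--(10) reduce to the Serre relations of $\g$ satisfied by the $e_i',f_i'$ and the $\mathfrak{sl}_2$-vectors $e'_{\theta^0},f'_{\theta^0}$; relations (4)--(5) come from the $\h'$-action and the commuting of equal-root vectors; and the Heisenberg-type relations (1)--(3),(6) follow from the invariance and normalization of $(\cdot|\cdot)$ together with the K\"ahler identities \eqref{kahlercalc}, the prefactors $2r$ and $ra_{ij}$, the scalars $d_i$ of \eqref{symmconst}, and the $\sqrt{2}/\delta_{i,n}$ corrections in type $A_{2n}$ being exactly what the $\si$-averaged images produce. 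Surjectivity holds because the images of $\al_i(k),X(\pm\al_i,k)$ for $i\in I$ generate the degree-zero subalgebra $\g_0\otimes\C[s,s^{-1}]$, while those of $\al_0,X(\pm\al_0)$ supply the degree $\pm1$ generators; perfectness is immediate from (4) and (6). Granting centrality of $\ker\pi$, the universal property of $T(\g)$ yields a unique $\phi:T(\g)\to t(\g)$ with $\pi\circ\phi=\eta$, surjective because $t(\g)$ is perfect.

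The substance of the argument is the construction of $\psi$. I would define it by lifting the formulas for $\pi$ into $T(\g)=L(\g,\si)\oplus\cK$, sending $\not{c}$ to a scalar multiple of the degree-zero central element $c_0=\overline{s^{-1}ds}$ and each $\al_i(k),X(\pm\al_i,k)$ to the same $\si$-averaged expression now read inside $T(\g)$, and then check relations (1)--(10) there. Because every bracket in $T(\g)$ emits a K\"ahler term $(x|y)\,\overline{b\,da}$, this is a careful central-term computation. In the mixed relation (6) for $i=j=0$ the second identity of \eqref{kahlercalc}, namely $\overline{s^{\ell}t^{-1}d(s^{k}t)}=\delta_{k,-\ell}k\,\overline{s^{-1}ds}+\overline{s^{k+\ell}t^{-1}dt}$, forces the image of $\al_0(k+\ell)$ to carry the extra central contribution $\overline{s^{k+\ell}t^{-1}dt}$; thus the elements $\overline{s^{m}t^{-1}dt}$ and $c_1=\overline{t^{-1}dt}$ enter the image of $\psi$ precisely through the $\al_0$-tower, while for $i\in I$ no such $t$-central term arises since those generators map to degree-zero elements. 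The $\omega^{\pm p}$ twists built into $X(\pm\al_0,k)$ are exactly what make the diagonal $p=q$ contributions survive with coefficient $\omega^{r}=1$, the off-diagonal ones vanishing by the root-space grading.

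With $\psi$ a homomorphism satisfying $\eta\circ\psi=\pi$ by construction, the composite $\psi\circ\phi:T(\g)\to T(\g)$ covers $\mathrm{id}_{L(\g,\si)}$, so by the uniqueness part of the universal property it equals $\mathrm{id}_{T(\g)}$; hence $\phi$ is injective and therefore an isomorphism, giving $t(\g)\cong T(\g)$ and the universality of $(t(\g),\pi)$. The main obstacle is the pair of facts that $\ker\pi$ is central and that $\psi$ is well defined: both hinge on the same delicate bookkeeping of K\"ahler-differential terms, where one must confirm that the full infinite-dimensional center $\cK$ of $T(\g)$ is accounted for by the single symbol $\not{c}$ together with the non-obvious central elements arising in the $\al_0$-tower, with the coefficients in relations (1)--(3),(6) matching the scalars forced by $(\cdot|\cdot)$, \eqref{symmconst}, and \eqref{kahlercalc} uniformly across all four types.
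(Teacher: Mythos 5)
Your overall architecture is sound and is, in outline, the standard one. Note first that this paper gives no proof of Proposition \ref{MRYJMM2} of its own: the proof environment is empty and the statement is imported from \cite[Theorem 3.3]{JMM2}, whose argument proceeds essentially along the route you propose --- realize $(t(\g),\pi)$ as a central extension, invoke the universal property of $(T(\g),\eta)$ from \cite[Theorem 2.1]{FJ} to obtain $\phi\colon T(\g)\to t(\g)$ with $\pi\circ\phi=\eta$, construct an explicit $\psi\colon t(\g)\to T(\g)$ with $\eta\circ\psi=\pi$ by checking the defining relations of $t(\g)$ inside $T(\g)$, and conclude by the uniqueness clause. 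Several of your specific points are exactly right: surjectivity of $\phi$ from perfectness of $t(\g)$, the uniqueness argument forcing $\psi\circ\phi=\mathrm{id}_{T(\g)}$, and, most importantly, the observation that the lift $\psi$ cannot be the naive one --- by the second identity of \eqref{kahlercalc}, closing relation (6) at $i=j=0$ forces $\psi(\al_0(m))$ to carry the central correction $\overline{s^{m}t^{-1}dt}$, which is precisely how the part of $\cK$ beyond $\C c_0$ enters the image.

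The genuine gap is the centrality of $\ker\pi$, which you ``grant'' and then, in your closing paragraph, misdiagnose: you assert it hinges on ``the same delicate bookkeeping of K\"ahler-differential terms'' as the construction of $\psi$. It cannot --- neither $t(\g)$ nor $L(\g,\si)$ contains any K\"ahler differentials, so centrality of $\ker\pi$ is an intrinsic structural statement about the abstractly presented algebra $t(\g)$ and requires a different idea entirely. The standard argument (as in \cite{MRY}) is a grading argument: the zero modes $\al_i(0)$, $i\in\tilde{I}$, act ad-semisimply on all generators by relations (1)--(4) and (6), so $t(\g)$ is graded by the affine root lattice $\hat{Q}$ (together with the mode grading); one shows the weight spaces attached to real roots of $\hat{\Delta}$ map injectively under $\pi$, while the weight spaces over $\Z\delta$ are spanned by the $\al_i(k)$ and $\not{c}$; since $\ker\pi$ is a graded ideal it is supported over $\Z\delta$, and bracketing a kernel element there with any $X(\pm\al_i,k)$ lands in a real-root space where $\pi$ is injective, forcing the bracket to vanish. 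Nor does the existence of $\psi$ substitute for this: from $\eta\circ\psi=\pi$ one gets only $[\ker\pi,\,t(\g)]\subseteq\ker\psi$, not $[\ker\pi,\,t(\g)]=0$. Without the intrinsic centrality argument the universal property of $(T(\g),\eta)$ cannot even be invoked, so your map $\phi$ --- and with it the whole comparison --- never gets off the ground.
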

\begin{proof}
\end{proof}

\section{Bosonic Representation}\label{bosrepn}

In this section we will use the MRY-like presentation of $T(\g)$ from Proposition \ref{MRYJMM2} to give a bosonic free field representation of $t(\g)$ for $\g = A_{2n-1}, A_{2n}, D_{n+1}, D_4$.

Consider the vector space  $\C^{n+4}$ with basis $\{\varepsilon_i \mid i = 0,1, \cdots , n+3\}$. This basis is orthonormal with respect to the inner product $(\cdot | \cdot )$ defined by
\begin{center}
$(\varepsilon_i | \varepsilon_j) = \delta_{ij}$.
\end{center}
Consider the lattice $P_0 = \bigoplus_{i=1}^{n+2}\Z \varepsilon_i$ and set $c = \frac{1}{\sqrt{2}} (\varepsilon_0 + i \varepsilon_{n+3})$ and $d = \frac{1}{\sqrt{2}} (\varepsilon_0 - i \varepsilon_{n+3})$. Then $( c | c) = 0 = ( d | d)$ and $( c | d) = 1$.

Recall the decomposition (\ref{findimldecomp}) of $\g$ where $r=2$ when $\g=A_{2n-1}, A_{2n}, D_{n+1}$ and $r=3$ when $\g=D_4$. The simple roots of the fixed point subalgebra $\g_0$ of $\g$ can be realized via the $\varepsilon_i$ elements as follows, with $i \in I \backslash \{n\}$ for $\g=A_{2n-1}, A_{2n}, D_{n+1}$.

\begin{itemize}
\item $\alpha_i = \frac{1}{\sqrt{2}}(\varepsilon_i - \varepsilon_{i+1}), \alpha_n=\sqrt{2}\varepsilon_n$, for $\g = A_{2n-1}$;
\item $\alpha_i = \frac{1}{\sqrt{2}}(\varepsilon_i - \varepsilon_{i+1}), \alpha_n=\frac{1}{\sqrt{2}}\varepsilon_n$, for $\g = A_{2n}$;
\item $\alpha_i = \varepsilon_i - \varepsilon_{i+1}, \alpha_n=\varepsilon_n$, for $\g = D_{n+1}$;
\item $\alpha_1 = \frac{1}{\sqrt{3}}(\varepsilon_1 - \varepsilon_2), \alpha_2 = \frac{1}{\sqrt{3}}(-\varepsilon_1 + 2\varepsilon_2 - \varepsilon_3)$ for $\g = D_4$.
\end{itemize}

It is known (see \cite[Proposition 8.3]{K1}) that $\g_1$ is an irreducible $\g_0$-module. The highest weight of this representation can be realized via the $\varepsilon_i$ elements as

\begin{center}
$\displaystyle \theta_0 := \frac{1}{r}\sum_{j=0}^{r-1} \sigma^j (\theta^0) =
\begin{cases}
 \frac{1}{\sqrt{2}}(\varepsilon_1 + \varepsilon_2) & (A_{2n-1}); \\
 \sqrt{2}\varepsilon_1 & (A_{2n}); \\
 \varepsilon_1 & (D_{n+1}); \\
 \frac{1}{\sqrt{3}}(\varepsilon_1 - \varepsilon_3) & (D_4).
\end{cases}$
\end{center}

Now define
\begin{center}
$\displaystyle \beta :=
\begin{cases}
 -\sqrt{2}c +\varepsilon_1  & (A_{2n-1}); \\
 -\frac{1}{\sqrt{2}}c + \varepsilon_1 & (A_{2n}); \\
 -c+\varepsilon_1 & (D_{n+1}); \\
 -\sqrt{3}c + \varepsilon_1 & (D_4)
\end{cases}$
\end{center}
and set
\begin{center}
$\alpha_0 := c - \theta_0 =
\begin{cases}
  -\frac{1}{\sqrt{2}}(\beta+\varepsilon_2) & (A_{2n-1}); \\
  -\sqrt{2}\beta  & (A_{2n}); \\
  -\beta & (D_{n+1}); \\
  - \frac{1}{\sqrt{3}}(\beta -\varepsilon_3) & (D_4).
\end{cases}$
\end{center}
Direct computation shows that the $\{\alpha_i \mid i \in \tilde{I} \}$ form the set of simple roots of the twisted affine Lie algebra $\hat{\g}$ with the GCM
$\tilde{A} = (a_{ij})_{i,j \in \tilde{I}}$. The symmetric nondegenerate invariant bilinear form is given by
\begin{equation}
(\alpha_i|\alpha_j)=d_ia_{ij},
\ \ \mbox{for all} \ i,j\in \tilde{I},
\end{equation}
where
\begin{center}
$\displaystyle (d_0,d_1, \cdots , d_n) =
\begin{cases}
 (\frac{1}{2},\frac{1}{2}, \cdots , \frac{1}{2}, 1)& (A_{2n-1}); \\
 (1, \frac{1}{2}, \cdots , \frac{1}{2}, \frac{1}{4}) & (A_{2n}); \\
 (\frac{1}{2}, 1, \cdots, 1, \frac{1}{2}) & (D_{n+1}); \\
 (\frac{1}{3}, \frac{1}{3}, 1) & (D_4).
\end{cases}$
\end{center}
Notice that the symmetrization constants $(d_0, \ldots , d_n)$ simply extend the constants (\ref{symmconst}) by adding the appropriate $d_0$ in each type. Furthermore, observe that $(c | \alpha_i) = 0 = (d | \alpha_i), i \in \tilde{I}$, so $c$ corresponds to the null root $\delta$ and $d$ corresponds to the dual gradation operator for $\hat{\g}$.

We now introduce a second copy of $\C^{n+4}$ having as a basis $\{\varepsilon_{\overline{i}} \mid i = 0,1, \cdots , n+3\}$. Extend the inner product $(\cdot | \cdot )$ by
\begin{center}
$(\varepsilon_{\overline{i}} | \varepsilon_{\overline{j}}) = \delta_{ij}$ and $(\varepsilon_i | \varepsilon_{\overline{j}}) = 0$
\end{center}
so that the $\{\varepsilon_{\overline{i}} \mid i = 0,1, \cdots , n+3\}$ form an orthonormal basis of $\C^{n+4}$. Form the lattice $\overline{P}_0 = \bigoplus_{i=1}^{n+2}\Z \varepsilon_{\overline{i}}$.  Observe that $(c | x) = 0$ for all $x \in \overline{P}_0$.

Define
\begin{center}
$\displaystyle \overline{\beta} :=
\begin{cases}
 -\sqrt{2}c +\varepsilon_{\overline{1}} & (A_{2n-1}); \\
 -\frac{1}{\sqrt{2}}c + \varepsilon_{\overline{1}} & (A_{2n}); \\
 -c+\varepsilon_{\overline{1}} & (D_{n+1}); \\
 -\sqrt{3}c + \varepsilon_{\overline{1}} & (D_4).
\end{cases}$
\end{center}

Now consider $P := P_0 \oplus \overline{P}_0 \oplus \Z c$ and denote by $P_{\C}$ the vector space $\C \otimes P$ which has $\{c, \varepsilon_i, \varepsilon_{\overline{i}} \mid i=1, 2, \ldots n+2 \}$ as a spanning set. Define $P^*$ by the condition $a \in P \Leftrightarrow a^* \in P^*$ and $P^*_{\C} := \C \otimes P^*$ (and hence $a \in P_{\C} \Leftrightarrow a^* \in P^*_{\C}$). Then the vector space $\mathcal C := P_{\C} \oplus P^*_{\C}$ is a polarization into maximal complementary isotropic subspaces with respect to the antisymmetric bilinear form given by
\begin{equation}\label{antisymmbilform}
-\langle a, b^* \rangle = \langle b^*, a \rangle = (a|b), \quad \langle a, b \rangle = \langle a^*, b^* \rangle=0,
\end{equation}
for all $a, b \in P_{\C}$.

Consider the unital algebra $W(P)$ generated by elements $a(k), a^*(k)$ where $a \in P_{\C}, k \in \mathcal{Z}$, for $\mathcal{Z} = \Z$ or $\mathcal{Z} = \Z + \frac{1}{2}$, with the relation
$$
[a(k), b(l)] = \langle a, b \rangle \delta_{k, -l}
$$
for $a, b\in \mathcal{C}$. $W(P)$ is an infinite dimensional Weyl algebra \cite{JMX}. Let
\begin{center}
$\displaystyle V := \bigotimes_{a} \left( \bigotimes_{k \in \mathcal{Z}_{> 0}} \C [a(-k)] \bigotimes_{k \in \mathcal{Z}_{> 0}} \C [a^* (-k)] \right)$
\end{center}
where the $a$ runs over $\{c, \varepsilon_i, \varepsilon_{\overline{i}} \mid i=1, 2, \ldots n+2 \}$. $V$ is an infinite dimensional representation space, called the Fock space, on which $W(P)$ acts in the usual way: $a(-k)$ acts as a creation operator and $a(k)$ acts as an annihilation operator for $k \in \mathcal{Z}_{> 0}$.

We introduce a notion of normal ordering. For $a(m), b(n) \in W(P)$, define
\begin{center}
$: \! a(m) b(n) \! : =
\begin{cases}
a(m)b(n) & \mbox{if } m < 0; \\
\frac{1}{2} \big(a(m)b(n) + b(n)a(m) \big) & \mbox{if } m = 0; \\
b(n)a(m) & \mbox{if } m > 0.
\end{cases}$
\end{center}
For formal variables $z,w$, we define bosonic fields by collecting components:

$$a(z) = \dsum_{m \in \mathcal{Z}}a(m)z^{-m-\frac{1}{2}} \ \  {\mbox{and}}  \ \  b(w) = \dsum_{n \in \mathcal{Z}}b(n)w^{-n-\frac{1}{2}}.$$
It follows that the normal ordering satisfies the relation
$$
: \! a(z)b(w) \! : = : \! b(w)a(z) \! :.
$$
The normal ordered product of $k > 2$ fields is defined inductively by:
$$
: \! a_1(z_1)a_2(z_2) \cdots a_k(z_k) \! : = : \! a_1(z_1) \left(: \! a_2(z_2) \cdots a_k(z_k) \! : \right) \! :.
$$
The contraction of two fields is defined by
$$
\underbrace{a(z)b(w)}=a(z)b(w) - : \! a(z)b(w) \! :,
$$
which contains all poles for $a(z)b(w)$. 

In order to calculate the bracket among normal ordered products, we make use of the following result which follows from Wick's theorem \cite[Theorem 3.3]{K2}, \cite[Theorem 3.1]{JMX}. The result holds in both cases, $\mathcal{Z} = \Z$ and $\mathcal{Z} = \Z + \frac{1}{2}$.

\begin{proposition}(\cite{JMX}, Proposition 3.3)
\label{bosbracket}
For $a, b, u, v \in \mathcal{C}$ and formal variables $z,w$, we have

$[ : \! a (z) b^* (z) \! : , : \! u (w) v^* (w) \! : ]
 = \langle a , v^* \rangle : \! b^* (z) u (z) \! : \delta (z - w)$ \\
$ + \langle b^* , u \rangle : \! a (z) v^* (z) \! : \delta (z - w)
 + \langle a , v^* \rangle  \langle b^* , u \rangle \partial_w \delta (z - w)$.
\end{proposition}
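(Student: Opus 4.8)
The plan is to derive the formula as a special case of Wick's theorem for free bosonic fields, so that the commutator of the two quadratic normal-ordered products collapses to a sum of terms, each obtained by contracting one field of ${:}a(z)b^*(z){:}$ with one field of ${:}u(w)v^*(w){:}$, the delta functions arising as the difference between the two regional expansions $\iota_{z,w}$ and $\iota_{w,z}$. Since the contractions are scalars (values of $\langle\cdot,\cdot\rangle$), this is exactly the regime in which Wick's theorem applies. The first step is to record the two-point contractions of the four constituent fields.

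First I would compute $[a(z),b^*(w)]$ directly from the defining relation $[a(m),b(l)] = \langle a,b\rangle\delta_{m,-l}$, obtaining the field commutator $[a(z),b^*(w)] = \langle a,b^*\rangle\delta(z-w)$ uniformly for $\mathcal{Z}=\Z$ and $\mathcal{Z}=\Z+\tfrac12$; in the case $\mathcal{Z}=\Z$ the apparent half-integer powers in $a(z)$ and $b^*(w)$ collapse against the delta via $z^{1/2}\delta(z-w) = w^{1/2}\delta(z-w)$. From this and the definition of normal ordering I would read off the contraction $\underbrace{a(z)v^*(w)}$ as the expansion in $|z|>|w|$ of a kernel whose only pole is simple at $z=w$: for $\mathcal{Z}=\Z+\tfrac12$ this kernel is exactly $\langle a,v^*\rangle\,\iota_{z,w}\tfrac{1}{z-w}$, while for $\mathcal{Z}=\Z$ the symmetric zero-mode convention adds a regular, pole-free term. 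The cross contractions $\underbrace{a(z)u(w)}$ and $\underbrace{b^*(z)v^*(w)}$ vanish because $P_{\C}$ and $P_{\C}^{*}$ are isotropic for the form (\ref{antisymmbilform}), i.e. $\langle a,u\rangle = 0 = \langle b^*,v^*\rangle$.

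Next I would apply Wick's theorem to ${:}a(z)b^*(z){:}\,{:}u(w)v^*(w){:}$ and to the reversed product, then subtract. The uncontracted term ${:}a(z)b^*(z)u(w)v^*(w){:}$ is symmetric in the two points and cancels. Of the four single contractions only $a$--$v^*$ and $b^*$--$u$ survive. For the $a$--$v^*$ contraction, combining the $|z|>|w|$ contraction from the first product with the $|w|>|z|$ contraction from the reversed product, and using the symmetry ${:}b^*(z)u(w){:} = {:}u(w)b^*(z){:}$, produces exactly the field commutator $[a(z),v^*(w)] = \langle a,v^*\rangle\delta(z-w)$; replacing $w$ by $z$ via the field analogue $g(w)\delta(z-w) = g(z)\delta(z-w)$ of the identity $a(z)\delta(z-w)=a(w)\delta(z-w)$ noted above then yields $\langle a,v^*\rangle\,{:}b^*(z)u(z){:}\,\delta(z-w)$, and symmetrically the $b^*$--$u$ contraction yields $\langle b^*,u\rangle\,{:}a(z)v^*(z){:}\,\delta(z-w)$. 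These are the first two terms of the claim.

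The double contraction, contracting $a$--$v^*$ and $b^*$--$u$ simultaneously, gives the last term, and this is where I expect the main difficulty. Here the two contractions are multiplied rather than combined as a field commutator, so the explicit kernels enter: the two sign flips from the antisymmetry of $\langle\cdot,\cdot\rangle$ cancel, and the product of the $|z|>|w|$ kernels has singular part $\iota_{z,w}\tfrac{1}{(z-w)^2}$, so subtracting the reversed $|w|>|z|$ product leaves $\langle a,v^*\rangle\langle b^*,u\rangle\big(\iota_{z,w}\tfrac{1}{(z-w)^2} - \iota_{w,z}\tfrac{1}{(w-z)^2}\big) = \langle a,v^*\rangle\langle b^*,u\rangle\,\partial_w\delta(z-w)$. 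The delicate point is to verify that the regular zero-mode piece present when $\mathcal{Z}=\Z$ contributes only a pole-free term to the squared kernel and hence drops out of the difference of regional expansions, so that the formula holds identically in both cases $\mathcal{Z}=\Z$ and $\mathcal{Z}=\Z+\tfrac12$; tracking the signs and the two regional expansions through this double contraction is the crux of the computation.
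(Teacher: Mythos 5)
Your proposal is correct and follows exactly the route the paper intends: the paper supplies no proof of its own, deferring to Wick's theorem (\cite[Theorem 3.3]{K2}, \cite[Theorem 3.1]{JMX}) and citing \cite[Proposition 3.3]{JMX}, and your derivation --- vanishing of the $a$--$u$ and $b^*$--$v^*$ contractions by isotropy, single contractions from the two orderings assembling into the field commutators that give the $\langle a , v^* \rangle : \! b^* (z) u (z) \! : \delta (z - w)$ and $\langle b^* , u \rangle : \! a (z) v^* (z) \! : \delta (z - w)$ terms, and the double contraction producing $\langle a , v^* \rangle \langle b^* , u \rangle \partial_w \delta (z - w)$ as the difference of the two regional expansions of $1/(z-w)^2$ --- is precisely that computation. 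Your verification that for $\mathcal{Z} = \Z$ the symmetric zero-mode convention perturbs the contraction kernel only by a pole-free term, whose square contributes a Laurent monomial that cancels between the $\iota_{z,w}$ and $\iota_{w,z}$ expansions, is exactly the detail behind the paper's remark that the result holds in both cases $\mathcal{Z} = \Z$ and $\mathcal{Z} = \Z + \frac{1}{2}$.
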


In the following theorem, using the MRY-like presentation of $T(\g)$ given in Proposition \ref{MRYJMM2}, we give a bosonic representation of the twisted toroidal algebra $t(\g)$, $\g = A_{2n-1}, A_{2n}, D_{n+1}, D_4$, on the Fock space $V$. This is our main result.

\begin{theorem}
Define a map $\rho : t(\g) \rightarrow \text{End}(V)$ as follows for $\g$ of each type, with $1 \leq i \leq n-1$.
For type $\A$,

\begin{itemize}
 \item $\not{c} \mapsto -1$,
 \item $\alpha_0 (z) \mapsto \,  - : \! \beta(z) \beta^*(z) \! : - : \! \varepsilon_2(z) \varepsilon^*_2(z) \! : + : \! \varepsilon_{\overline{1}}(z) \varepsilon^*_{\overline{1}}(z) \! : +  : \! \varepsilon_{\overline{2}}(z) \varepsilon^*_{\overline{2}}(z) \! :$,
 \item $\alpha_i (z) \mapsto \, : \! \varepsilon_i(z) \varepsilon^*_i(z) \! : - : \! \varepsilon_{i+1}(z) \varepsilon^*_{i+1}(z) \! :
- : \! \varepsilon_{\overline{i}}(z) \varepsilon^*_{\overline{i}}(z) \! : + : \! \varepsilon_{\overline{i+1}}(z) \varepsilon^*_{\overline{i+1}}(z) \! :$,
 \item $\alpha_n (z) \mapsto \, : \! \varepsilon_n(z) \varepsilon^*_n(z) \! : - : \! \varepsilon_{\overline{n}}(z) \varepsilon^*_{\overline{n}}(z) \! :$,
 \item $X(\alpha_0, z) \mapsto \, : \! \varepsilon_{\overline{2}}(z) \beta^*(z) \! : + : \! \varepsilon_{\overline{1}}(z) \varepsilon^*_2(z) \! :$,
 \item $X(-\alpha_0, z) \mapsto \, : \! \varepsilon^*_{\overline{2}}(z) \beta(z) \! : + : \! \varepsilon^*_{\overline{1}}(z) \varepsilon_2(z) \! :$,
 \item $X(\alpha_i, z) \mapsto \, : \! \varepsilon_i(z) \varepsilon^*_{i+1}(z) \! : + : \! \varepsilon_{\overline{i+1}}(z) \varepsilon^*_{\overline{i}}(z) \! :$,
 \item $X(-\alpha_i, z) \mapsto \, : \! \varepsilon^*_i(z) \varepsilon_{i+1}(z) \! : + : \! \varepsilon^*_{\overline{i+1}}(z) \varepsilon_{\overline{i}}(z) \! :$,
 \item $X(\alpha_n, z) \mapsto \, : \! \varepsilon_n(z) \varepsilon^*_{\overline{n}}(z) \! :$,
 \item $X(-\alpha_n, z) \mapsto \, : \! \varepsilon^*_n(z) \varepsilon_{\overline{n}}(z) \! :$.
\end{itemize}

For type $\At$,

\begin{itemize}
 \item $\not{c} \mapsto -1$,
 \item $\alpha_0 (z) \mapsto \, : \! \varepsilon_{\overline{1}}(z) \varepsilon^*_{\overline{1}}(z) \! : - : \! \beta(z) \beta^*(z) \! :$,
 \item $\alpha_i (z) \mapsto \, : \! \varepsilon_i(z) \varepsilon^*_i(z) \! : - : \! \varepsilon_{i+1}(z) \varepsilon^*_{i+1}(z) \! :
- : \! \varepsilon_{\overline{i}}(z) \varepsilon^*_{\overline{i}}(z) \! : + : \! \varepsilon_{\overline{i+1}}(z) \varepsilon^*_{\overline{i+1}}(z) \! :$,
 \item $\alpha_n (z) \mapsto \, 2 (: \! \varepsilon_n(z) \varepsilon^*_n(z) \! : - : \! \varepsilon_{\overline{n}}(z) \varepsilon^*_{\overline{n}}(z) \! :)$,
 \item $X(\alpha_0, z) \mapsto \, : \! \beta^*(z) \varepsilon_{\overline{1}}(z) \! :$,
 \item $X(-\alpha_0, z) \mapsto \, : \! \beta(z) \varepsilon^*_{\overline{1}}(z) \! :$,
 \item $X(\alpha_i, z) \mapsto \, : \! \varepsilon_i(z) \varepsilon^*_{i+1}(z) \! : + : \! \varepsilon_{\overline{i+1}}(z) \varepsilon^*_{\overline{i}}(z) \! :$,
 \item $X(-\alpha_i, z) \mapsto \, : \! \varepsilon^*_i(z) \varepsilon_{i+1}(z) \! : + : \! \varepsilon^*_{\overline{i+1}}(z) \varepsilon_{\overline{i}}(z) \! :$,
 \item $X(\alpha_n, z) \mapsto \, \sqrt{2} (: \! \varepsilon_n(z) \varepsilon^*_{n+1}(z) \! : - : \! \varepsilon_{n+1}(z) \varepsilon^*_{\overline{n}}(z) \! :)$,
 \item $X(-\alpha_n, z) \mapsto \, \sqrt{2} (: \! \varepsilon^*_n(z) \varepsilon_{n+1}(z) \! : - : \! \varepsilon^*_{n+1}(z) \varepsilon_{\overline{n}}(z) \! :)$.
\end{itemize}

For type $\D$,

\begin{itemize}
 \item $\not{c} \mapsto -2$,
 \item $\alpha_0 (z) \mapsto \, 2 (: \! \varepsilon_{\overline{1}}(z) \varepsilon^*_{\overline{1}}(z) \! : - : \! \beta(z) \beta^*(z) \! :)$,
 \item $\alpha_i (z) \mapsto \, : \! \varepsilon_i(z) \varepsilon^*_i(z) \! : - : \! \varepsilon_{i+1}(z) \varepsilon^*_{i+1}(z) \! :
- : \! \varepsilon_{\overline{i}}(z) \varepsilon^*_{\overline{i}}(z) \! : + : \! \varepsilon_{\overline{i+1}}(z) \varepsilon^*_{\overline{i+1}}(z) \! :$,
 \item $\alpha_n (z) \mapsto \, 2(: \! \varepsilon_n(z) \varepsilon^*_n(z) \! : - : \! \varepsilon_{\overline{n}}(z) \varepsilon^*_{\overline{n}}(z) \! :)$,
 \item $X(\alpha_0, z) \mapsto \, \sqrt{2} (: \! \beta^*(z) \varepsilon_{n+2}(z) \! : - : \! \varepsilon^*_{n+2}(z) \varepsilon_{\overline{1}}(z) \! :)$,
 \item $X(-\alpha_0, z) \mapsto \, \sqrt{2} (: \! \beta(z) \varepsilon^*_{n+2}(z) \! : - : \! \varepsilon_{n+2}(z) \varepsilon^*_{\overline{1}}(z) \! :)$,
 \item $X(\alpha_i, z) \mapsto \, : \! \varepsilon_i(z) \varepsilon^*_{i+1}(z) \! : + : \! \varepsilon_{\overline{i+1}}(z) \varepsilon^*_{\overline{i}}(z) \! :$,
 \item $X(-\alpha_i, z) \mapsto \, : \! \varepsilon^*_i(z) \varepsilon_{i+1}(z) \! : + : \! \varepsilon^*_{\overline{i+1}}(z) \varepsilon_{\overline{i}}(z) \! :$,
 \item $X(\alpha_n, z) \mapsto \, \sqrt{2} (: \! \varepsilon_n(z) \varepsilon^*_{n+1}(z) \! : - : \! \varepsilon_{n+1}(z) \varepsilon^*_{\overline{n}}(z) \! :)$,
 \item $X(-\alpha_n, z) \mapsto \, \sqrt{2} (: \! \varepsilon^*_n(z) \varepsilon_{n+1}(z) \! : - : \! \varepsilon^*_{n+1}(z) \varepsilon_{\overline{n}}(z) \! :)$.
\end{itemize}

For type $\Df$,

\begin{itemize}
 \item $\not{c} \mapsto -2$,
 \item $\alpha_0 (z) \mapsto \, 2 : \! \varepsilon_{\overline{1}}(z) \varepsilon^*_{\overline{1}}(z) \! : - 2 : \! \beta(z) \beta^*(z) \! : 
 + : \! \varepsilon_{\overline{2}}(z) \varepsilon^*_{\overline{2}}(z) \! : - : \! \varepsilon_2(z) \varepsilon^*_2(z) \! :
 + : \! \varepsilon_{\overline{3}}(z) \varepsilon^*_{\overline{3}}(z) \! : - : \! \varepsilon_3(z) \varepsilon^*_3(z) \! :$,
 \item $\alpha_1 (z) \mapsto \, : \! \varepsilon_1(z) \varepsilon^*_1(z) \! : - : \! \varepsilon_{\overline{1}}(z) \varepsilon^*_{\overline{1}}(z) \! : 
 - : \! \varepsilon_2(z) \varepsilon^*_2(z) \! : + : \! \varepsilon_{\overline{2}}(z) \varepsilon^*_{\overline{2}}(z) \! :
+ 2 : \! \varepsilon_3(z) \varepsilon^*_3(z) \! :  - 2 : \! \varepsilon_{\overline{3}}(z) \varepsilon^*_{\overline{3}}(z) \! :$,
 \item $\alpha_2 (z) \mapsto \, : \! \varepsilon_2(z) \varepsilon^*_2(z) \! : - : \! \varepsilon_{\overline{2}}(z) \varepsilon^*_{\overline{2}}(z) \! : 
 - : \! \varepsilon_3(z) \varepsilon^*_3(z) \! : + : \! \varepsilon_{\overline{3}}(z) \varepsilon^*_{\overline{3}}(z) \! :$,
 \item $X(\alpha_0, z) \mapsto \, : \! \beta^*(z) \varepsilon_4(z) \! : - : \! \varepsilon^*_{\overline{4}}(z) \overline{\beta}(z) \! :
 + \omega^2 : \! \varepsilon^*_2(z) \varepsilon_{\overline{3}}(z) \! : - \omega^2 : \! \varepsilon^*_3(z) \varepsilon_{\overline{2}}(z) \! :
 + \omega : \! \varepsilon^*_1(z) \varepsilon_{\overline{4}}(z) \! : - \omega : \! \varepsilon^*_4(z) \varepsilon_{\overline{1}}(z) \! :$,
 \item $X(-\alpha_0, z) \mapsto \, : \! \beta(z) \varepsilon^*_4(z) \! : - : \! \varepsilon_{\overline{4}}(z) \overline{\beta}^*(z) \! :
 + \omega : \! \varepsilon_2(z) \varepsilon^*_{\overline{3}}(z) \! : - \omega : \! \varepsilon_3(z) \varepsilon^*_{\overline{2}}(z) \! :
+ \omega^{2} : \! \varepsilon_1(z) \varepsilon^*_{\overline{4}}(z) \! : - \omega^{2} : \! \varepsilon_4(z) \varepsilon^*_{\overline{1}}(z) \! :$,
 \item $X(\alpha_1, z) \mapsto \, : \! \varepsilon_1(z) \varepsilon^*_2(z) \! : - : \! \varepsilon_{\overline{2}}(z) \varepsilon^*_{\overline{1}}(z) \! :
 + : \! \varepsilon_3(z) \varepsilon^*_4(z) \! : - : \! \varepsilon_{\overline{4}}(z) \varepsilon^*_{\overline{3}}(z) \! :
 + : \! \varepsilon_3(z) \varepsilon^*_{\overline{4}}(z) \! : - : \! \varepsilon_4(z) \varepsilon^*_{\overline{3}}(z) \! :$,
 \item $X(-\alpha_1, z) \mapsto \, : \! \varepsilon^*_1(z) \varepsilon_2(z) \! : - : \! \varepsilon^*_{\overline{2}}(z) \varepsilon_{\overline{1}}(z) \! :
 + : \! \varepsilon^*_3(z) \varepsilon_4(z) \! : - : \! \varepsilon^*_{\overline{4}}(z) \varepsilon_{\overline{3}}(z) \! :
 + : \! \varepsilon^*_3(z) \varepsilon_{\overline{4}}(z) \! : - : \! \varepsilon^*_4(z) \varepsilon_{\overline{3}}(z) \! :$,
 \item $X(\alpha_2, z) \mapsto \, : \! \varepsilon_2(z) \varepsilon^*_3(z) \! : - : \! \varepsilon_{\overline{3}}(z) \varepsilon^*_{\overline{2}}(z) \! :$,
 \item $X(-\alpha_2, z) \mapsto \, : \! \varepsilon^*_2(z) \varepsilon_3(z) \! : - : \! \varepsilon^*_{\overline{3}}(z) \varepsilon_{\overline{2}}(z) \! :$.
\end{itemize}
Then $\rho$ is a homomorphism, and hence gives a representation of the twisted 2-toroidal Lie algebra $t(\g)$. The representation is level $(-1,0)$ for $\g = \A, \At$ and level $(-2,0)$ for $\g = \D, \Df$.
\end{theorem}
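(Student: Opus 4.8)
The plan is to exploit the fact that, by Proposition \ref{MRYJMM2}, $t(\g)$ is presented by the generators $\not{c}$, $\al_m(k)$, and $X(\pm\al_m, k)$ ($m\in\tilde{I}$, $k\in\Z$) subject to relations (1)--(10); hence the assignment $\rho$ extends to a Lie algebra homomorphism precisely when its images obey these relations. Collecting modes into the fields $\al_i(z)$ and $X(\pm\al_i, z)$, it is equivalent to check the power-series relations \eqref{twrelationsps}. Since $\rho$ sends every generating field to a finite signed sum of normal-ordered bilinears $:\!a(z)b^*(z)\!:$ with $a,b\in\mathcal{C}$, each bracket can be evaluated by applying Proposition \ref{bosbracket} bilinearly. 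The only external data needed are the pairings coming from \eqref{antisymmbilform} together with the inner-product table for $\be, \ov\be, c, \varepsilon_i, \varepsilon_{\ov i}$; in particular $(\be|\be)=(\ov\be|\ov\be)=1$ and $(c|x)=0$ for all $x\in P_0\oplus\ov P_0$, and these are exactly the identities that will make the central coefficients come out as prescribed.

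First I would dispatch the Cartan-type relations (1)--(3). Writing $\al_i(z)$ and $\al_j(w)$ as signed sums of diagonal bilinears $:\!a(z)a^*(z)\!:$, the two $\dzmw$-terms produced by Proposition \ref{bosbracket} cancel in pairs (using $:\!a(z)a^*(z)\!:\,=\,:\!a^*(z)a(z)\!:$ and the orthogonality of distinct basis vectors), leaving only the central $\pdzmw$-term. Summing the surviving coefficients $\langle a,u^*\rangle\langle a^*,u\rangle$ over matching constituents reproduces the $d_i$-scaled entries $r a_{ij}$, $a_{ij}$, etc., case by case, once $\not{c}$ is set to $-1$ (types $\A, \At$) or $-2$ (types $\D, \Df$). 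Relation (4) expresses that $\al_i(z)$ acts on $X(\pm\al_j, w)$ with eigenvalue $\pm a_{ij}$: only the two $\dzmw$-terms survive and, via $a(z)\dzmw = a(w)\dzmw$ (noted after \eqref{twrelationsps}), they collapse to $\pm a_{ij}X(\pm\al_j, z)\dzmw$. Relation (5) holds because a bracket of two like-signed root fields only involves pairings between two unstarred (or two starred) vectors, all of which vanish under \eqref{antisymmbilform}.

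The delicate case is relation (6), $[X(\al_i, z), X(-\al_j, w)]$. Here I would show that the $\dzmw$-terms reassemble exactly into $\delta_{i,j}\,\al_i(z)\dzmw$, while the $\pdzmw$-terms contribute the central charge; matching the intricate factors $(r-\delta_{i,n}(r-1))$, $(r(1+\delta_{i,n}(r-1))-\delta_{i,0}(r-1))$, and $(1+(\delta_{i,0}+\delta_{i,n})(r-1))$ is precisely what forces the $\sqrt{2}$ and $\sqrt{3}$ scalings together with the coefficients $2$ on $\al_n$ (types $\At, \D$) and on $\al_0$ (type $\D$) in the definition of $\rho$. I expect this to be the main obstacle, as it must be verified node by node and type by type, with the exceptional nodes $0$ and $n$ handled separately and, for $\Df^{(3)}$, the cube root of unity $\om$ entering the constituents of $X(\pm\al_0, z)$ and $X(\pm\al_1, z)$.

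Finally, for the Serre relations (7)--(10) I would observe that each inner bracket $\mathrm{ad}\,X(\pm\al_i, z_2)\,X(\pm\al_j, z_1)$ again yields a single normal-ordered bilinear times $\delta(z_2-z_1)$, a composite root field; iterating the adjoint action then either forces a vanishing pairing at some stage or produces a bilinear $:\!a(z)b^*(z)\!:$ in which $a$ or $b$ is zero, so the prescribed multiple bracket vanishes, in agreement with $a_{ij}=0,-1,-2,-3$. The longest computations are relation (9) at the node $n$ (types $\At, \D$) and relation (10) for $\Df^{(3)}$, but they are structurally identical to the lower cases. Once relations (1)--(10) are confirmed, $\rho$ is a homomorphism, and the level is read off from the central elements: $c_0 = \ov{s^{-1}ds}$ is identified with $\not{c}$, hence acts by $-1$ or $-2$, while $c_1 = \ov{t^{-1}dt}$ is realized in $t(\g)$ through the degree-zero modes $\al_i(0)$ (the canonical central element of the twisted affine subalgebra $\hat{\g}$), and evaluating these zero modes on the Fock space $V$ shows it acts by $0$; this gives level $(-1,0)$ for $\A, \At$ and $(-2,0)$ for $\D, \Df$.
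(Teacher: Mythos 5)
Your overall plan coincides with the paper's proof: by Proposition \ref{MRYJMM2} it suffices to verify the power-series relations \eqref{twrelationsps} for the proposed images, and every bracket is evaluated by bilinear application of Proposition \ref{bosbracket}, with the Cartan relations (1)--(3) reducing to the central $\pdzmw$-terms, relation (4) collapsing via $a(z)\dzmw = a(w)\dzmw$, relation (6) matched node by node and type by type, the Serre relations handled by iterating $\mathrm{ad}$ on composite bilinears times delta functions, and the levels read off from $\rho(\not{c})$ together with the vanishing action of $c_1$ (the latter resting, as the paper remarks, on the fact that $\beta$ acts as $\varepsilon_1$ on $V$). However, your justification of relation (5) is wrong as stated. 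You assert that a bracket of two like-signed root fields ``only involves pairings between two unstarred (or two starred) vectors,'' all vanishing under \eqref{antisymmbilform}. That is not what Proposition \ref{bosbracket} produces: the contractions it generates are $\langle a, v^* \rangle = -(a|v)$ and $\langle b^*, u \rangle = (b|u)$, i.e.\ unstarred against starred, and these do \emph{not} all vanish precisely at the exceptional nodes where one basis vector occurs both starred and unstarred among the constituents of a single root field. For instance, in types $\At$ and $\D$ one has $\rho(X(\al_n,z)) = \sqrt{2}\,( :\! \varepsilon_n(z) \varepsilon^*_{n+1}(z) \!: - :\! \varepsilon_{n+1}(z) \varepsilon^*_{\overline{n}}(z) \!:)$, and the cross-brackets involve the nonzero contraction $\langle \varepsilon^*_{n+1}, \varepsilon_{n+1} \rangle$; the same phenomenon occurs for $X(\pm\al_0,z)$ in type $\D$ (through $\varepsilon_{n+2}$) and for $X(\pm\al_0,z)$ and $X(\pm\al_1,z)$ in type $\Df$ (through $\varepsilon_4$ and $\varepsilon_{\overline{4}}$).

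The relation nevertheless holds, but by a different mechanism, and the paper singled out exactly this computation as one of its representative examples: the two surviving $\dzmw$-terms are $-2 :\! \varepsilon_n(z) \varepsilon^*_{\overline{n}}(z) \!: \dzmw + 2 :\! \varepsilon^*_{\overline{n}}(z) \varepsilon_n(z) \!: \dzmw$, which cancel by the symmetry $:\! a(z) b(w) \!: \, = \, :\! b(w) a(z) \!:$ of the normal ordering, while no central $\pdzmw$-term arises because in each such bracket at least one of the two contractions vanishes. So your argument for relation (5) would fail if executed literally at the nodes carrying the $\sqrt{2}$-scaled (and, in type $\Df$, the $\omega$-weighted) constituents; replace the blanket orthogonality claim by this pairwise-cancellation argument, and the rest of your verification scheme goes through exactly as in the paper.
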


\begin{proof}
We need only show that the relations (\ref{twrelationsps}) of $t(\g)$ hold. This requires a significant number of calculations which are straightforward with repeated use of Proposition \ref{bosbracket}. Hence we show only a few of the relations below which are representative of the others; the rest can be readily verified by similar computations.

As a first example, we consider relation (3) with $(i,j)=(n-1,n)$ in the case $\g = \A$. Using Proposition \ref{bosbracket} we have:
\begin{align*}
\big [ \rho & \big ( \alpha_{n-1} (z) \big ) , \rho \big ( \alpha_n (w) \big ) \big ]
 = [: \! \varepsilon_{n-1}(z) \varepsilon^*_{n-1}(z) \! : - : \! \varepsilon_n(z) \varepsilon^*_n(z) \! : 
 \\ & - : \! \varepsilon_{\overline{n-1}}(z) \varepsilon^*_{\overline{n-1}}(z) \! : + : \! \varepsilon_{\overline{n}}(z) \varepsilon^*_{\overline{n}}(z) \! :,
 : \! \varepsilon_n(w) \varepsilon^*_n(w) \! : - : \! \varepsilon_{\overline{n}}(w) \varepsilon^*_{\overline{n}}(w) \! : ] \\
& = - ( - : \! \varepsilon^*_n(z) \varepsilon_n(z) \! : + : \! \varepsilon_n(z) \varepsilon^*_n(z) \! :)\dzmw - (-1)\partial_w \dzmw \\
& \hspace{4mm} - ( - : \! \varepsilon^*_{\overline{n}}(z) \varepsilon_{\overline{n}}(z) \! : + : \! \varepsilon_{\overline{n}}(z) \varepsilon^*_{\overline{n}}(z) \! :)\dzmw - (-1)\partial_w \dzmw \\
& = 2 \partial_w \dzmw = a_{n-1,n} \partial_w \dzmw \rho(\not{c})
 = \rho \big( [\alpha_{n-1} (z), \alpha_n (w)] \big).
\end{align*}

Secondly, consider relation (4) with $1 \leq i,j \leq n-1$; the calculation is the same in types $\A, \At$, and $\D$. In each of those cases we have
\begin{align*}
\big [ & \rho \big ( \alpha_i(z) \big ) , \rho \big ( X(\alpha_j, w) \big ) \big ]
 = [: \! \varepsilon_i(z) \varepsilon^*_i(z) \! : - : \! \varepsilon_{i+1}(z) \varepsilon^*_{i+1}(z) \! : - : \! \varepsilon_{\overline{i}}(z) \varepsilon^*_{\overline{i}}(z) \! : \\ &+ : \! \varepsilon_{\overline{i+1}}(z) \varepsilon^*_{\overline{i+1}}(z) \! : ,
 : \! \varepsilon_j(w) \varepsilon^*_{j+1}(w) \! : + : \! \varepsilon_{\overline{j+1}}(w) \varepsilon^*_{\overline{j}}(w) \! : ] \\
& = \left(-\delta_{i,j+1} : \! \varepsilon^*_i(z) \varepsilon_j(z) \! : + \delta_{ij} : \! \varepsilon_i(z) \varepsilon^*_{j+1}(z) \! : \right) \dzmw \\
& \hspace{4mm} +\left(- (-\delta_{ij}) : \! \varepsilon^*_{i+1}(z) \varepsilon_j(z) \! :  - \delta_{i+1,j} : \! \varepsilon_{i+1}(z) \varepsilon^*_{j+1}(z) \! : \right) \dzmw \\
& \hspace{4mm} +\left( -(-\delta_{ij}) : \! \varepsilon^*_{\overline{i}}(z) \varepsilon_{\overline{j+1}}(z) \! : - \delta_{i,j+1} : \! \varepsilon_{\overline{i}}(z) \varepsilon^*_{\overline{j}}(z) \! : \right) \dzmw \\
& \hspace{4mm} +\left( - \delta_{i+1,j} : \! \varepsilon^*_{\overline{i+1}}(z) \varepsilon_{\overline{j+1}}(z) \! : + \delta_{ij} : \! \varepsilon_{\overline{i+1}}(z) \varepsilon^*_{\overline{j}}(z) \! : \right) \dzmw \\
& = (2\delta_{ij} - \delta_{i,j+1} - \delta_{i+1,j}) (: \! \varepsilon_j(z) \varepsilon^*_{j+1}(z) \! : + : \! \varepsilon_{\overline{j+1}}(z) \varepsilon^*_{\overline{j}}(z) \! :) \dzmw \\
& = a_{ij} \rho \big( X(\alpha_j, z) \big) \dzmw 
 = \rho \big( [\alpha_i(z), X( \alpha_j, w)] \big).
\end{align*}

In types $\At, \D$ with $i=n$, Relation (5) is shown below.
\begin{align*}
\big [ \rho \big ( X(\alpha_n, z) \big ) , \rho \big ( X(\alpha_n, w) \big ) \big ]
& = [ \sqrt{2} (: \! \varepsilon_n(z) \varepsilon^*_{\overline{n+1}}(z) \! : - : \! \varepsilon_{\overline{n+1}}(z) \varepsilon^*_{\overline{n}}(z) \! :), \\
& \sqrt{2} (: \! \varepsilon_n(w) \varepsilon^*_{\overline{n+1}}(w) \! : - : \! \varepsilon_{\overline{n+1}}(w) \varepsilon^*_{\overline{n}}(w) \! :) ] \\
 = -2 : \! \varepsilon_n(z) \varepsilon^*_{\overline{n}}(z) \! : & \dzmw + 2 : \! \varepsilon^*_{\overline{n}}(z) \varepsilon_n(z) \! : \dzmw
 = 0.
\end{align*}

We show two examples of Relation (6). Consider type $\Df$ when $i=0, j=2$; this result is not obviously 0 a priori (cf. the same calculation is obviously 0 a priori in type $\D$, for example). But indeed,
\begin{align*}
\big [ \rho & \big ( X(\alpha_0, z) \big ) , \rho \big ( X(-\alpha_2, w) \big ) \big ] 
 = [ : \! \beta^*(z) \varepsilon_4(z) \! : - : \! \varepsilon^*_{\overline{4}}(z) \overline{\beta}(z) \! :
 + \omega^2 : \! \varepsilon^*_2(z) \varepsilon_{\overline{3}}(z) \! : \\ &- \omega^2 : \! \varepsilon^*_3(z) \varepsilon_{\overline{2}}(z) \! :
 + \omega : \! \varepsilon^*_1(z) \varepsilon_{\overline{4}}(z) \! : - \omega : \! \varepsilon^*_4(z) \varepsilon_{\overline{1}}(z) \! : ,
 : \! \varepsilon^*_2(w) \varepsilon_3(w) \! : - : \! \varepsilon^*_{\overline{3}}(w) \varepsilon_{\overline{2}}(w) \! : ] \\
& = - (-1) \omega^{2} : \! \varepsilon^*_2(z) \varepsilon_{\overline{2}}(z) \! : \dzmw - \omega^{2} : \! \varepsilon_{\overline{2}}(z) \varepsilon^*_2(z) \! : \dzmw 
 = 0.
\end{align*}

Relation (6) in type $\Df$ with $i=j=1$ is computed as
\begin{align*}
& \big [ \rho \big ( X(\alpha_1, z) \big ) , \rho \big ( X(-\alpha_1, w) \big ) \big ] 
 = [ : \! \varepsilon_1(z) \varepsilon^*_2(z) \! : - : \! \varepsilon_{\overline{2}}(z) \varepsilon^*_{\overline{1}}(z) \! :
 + : \! \varepsilon_3(z) \varepsilon^*_4(z) \! : \\ & - : \! \varepsilon_{\overline{4}}(z) \varepsilon^*_{\overline{3}}(z) \! :
 + : \! \varepsilon_3(z) \varepsilon^*_{\overline{4}}(z) \! : - : \! \varepsilon_4(z) \varepsilon^*_{\overline{3}}(z) \! : ,
 : \! \varepsilon^*_1(w) \varepsilon_2(w) \! : - : \! \varepsilon^*_{\overline{2}}(w) \varepsilon_{\overline{1}}(w) \! : \\
& + : \! \varepsilon^*_3(w) \varepsilon_4(w) \! : - : \! \varepsilon^*_{\overline{4}}(w) \varepsilon_{\overline{3}}(w) \! :
 + : \! \varepsilon^*_3(w) \varepsilon_{\overline{4}}(w) \! : - : \! \varepsilon^*_4(w) \varepsilon_{\overline{3}}(w) \! : ] \\
& = ( - : \! \varepsilon^*_2(z) \varepsilon_2(z) \! : + : \! \varepsilon_1(z) \varepsilon^*_1(z) \! : ) \dzmw
 - \partial_w \dzmw \\
& \hspace{3mm} + ( - : \! \varepsilon^*_{\overline{1}}(z) \varepsilon_{\overline{1}}(z) \! : + : \! \varepsilon_{\overline{2}}(z) \varepsilon^*_{\overline{2}}(z) \! :)\dzmw
 - \partial_w \dzmw \\
& \hspace{3mm} + ( - : \! \varepsilon^*_4(z) \varepsilon_4(z) \! : + : \! \varepsilon_3(z) \varepsilon^*_3(z) \! : ) \dzmw
 - \partial_w \dzmw
 - : \! \varepsilon^*_4(z) \varepsilon_{\overline{4}}(z) \! : \dzmw \\
& \hspace{3mm} + ( - : \! \varepsilon^*_{\overline{3}}(z) \varepsilon_{\overline{3}}(z) \! : + : \! \varepsilon_{\overline{4}}(z) \varepsilon^*_{\overline{4}}(z) \! :)\dzmw
 - \partial_w \dzmw
 + : \! \varepsilon_{\overline{4}}(z) \varepsilon^*_4(z) \! : \dzmw \\
& \hspace{3mm} - : \! \varepsilon^*_{\overline{4}}(z) \varepsilon_4(z) \! : \dzmw
+ ( - : \! \varepsilon^*_{\overline{4}}(z) \varepsilon_{\overline{4}}(z) \! : + : \! \varepsilon_3(z) \varepsilon^*_3(z) \! : ) \dzmw
 - \partial_w \dzmw \\
& \hspace{3mm} + : \! \varepsilon_4(z) \varepsilon^*_{\overline{4}}(z) \! : \dzmw
+ ( - : \! \varepsilon^*_{\overline{3}}(z) \varepsilon_{\overline{3}}(z) \! : + : \! \varepsilon_4(z) \varepsilon^*_4(z) \! :)\dzmw
 - \partial_w \dzmw \\
& = \big( : \! \varepsilon_1(z) \varepsilon^*_1(z) \! : - : \! \varepsilon_{\overline{1}}(z) \varepsilon^*_{\overline{1}}(z) \! : 
 - : \! \varepsilon_2(z) \varepsilon^*_2(z) \! : + : \! \varepsilon_{\overline{2}}(z) \varepsilon^*_{\overline{2}}(z) \! : \\
& \hspace{6mm}+ 2 : \! \varepsilon_3(z) \varepsilon^*_3(z) \! :  - 2 : \! \varepsilon_{\overline{3}}(z) \varepsilon^*_{\overline{3}}(z) \! : \big) \dzmw
 - 6 \partial_w \dzmw \\
& = \alpha_1 (z) \dzmw + r \partial_w \dzmw \rho(\not{c}) 
 = \rho \big( [X(\alpha_1, z), X(-\alpha_1, w)] \big).
\end{align*}

Other calculations are similar, noting that the element $\beta$ acts as $\varepsilon_1$ on $V$ as in \cite{JMX} (see the proof of Theorem 3.2). We show one representative example of the Serre relations (7)--(10): relation (9) in type $\At$ when $i=n, j=n-1$. We have
\begin{align*}
\big [ & \rho \big ( X(\alpha_n, z_2) \big ) , \rho \big ( X(\alpha_{n-1}, z_1) \big ) \big ] 
 = [ \sqrt{2} (: \! \varepsilon_n(z_2) \varepsilon^*_{\overline{n+1}}(z_2) \! : - : \! \varepsilon_{\overline{n+1}}(z_2) \varepsilon^*_{\overline{n}}(z_2) \! :), \\
& : \! \varepsilon_{n-1}(z_1) \varepsilon^*_n(z_1) \! : + : \! \varepsilon_{\overline{n}}(z_1) \varepsilon^*_{\overline{n-1}}(z_1) \! :] \\
& = -\sqrt{2} : \! \varepsilon^*_{\overline{n+1}}(z_2) \varepsilon_{n-1}(z_2) \! : \delta (z_2 - z_1) - \sqrt{2} : \! \varepsilon_{\overline{n+1}}(z_2) \varepsilon^*_{\overline{n-1}}(z_2) \! : \delta (z_2 - z_1). \\
& \text{Applying ad} X(\alpha_n, z_3) \text{ to this expression gives us} \\
& = [ \sqrt{2} (: \! \varepsilon_n(z_3) \varepsilon^*_{\overline{n+1}}(z_3) \! : - : \! \varepsilon_{\overline{n+1}}(z_3) \varepsilon^*_{\overline{n}}(z_3) \! :), \\
& \hspace{3mm} -\sqrt{2} : \! \varepsilon^*_{\overline{n+1}}(z_2) \varepsilon_{n-1}(z_2) \! : \delta (z_2 - z_1) - \sqrt{2} : \! \varepsilon_{\overline{n+1}}(z_2) \varepsilon^*_{\overline{n-1}}(z_2) \! : \delta (z_2 - z_1) ] \\
& = -2 : \! \varepsilon_n(z_3) \varepsilon^*_{\overline{n-1}}(z_3) \! : \delta (z_2 - z_1) \delta (z_3 - z_1) \\
& \hspace{3mm} - 2 : \! \varepsilon^*_{\overline{n}}(z_3) \varepsilon_{n-1}(z_3) \! : \delta (z_2 - z_1) \delta (z_3 - z_1). \\
& \text{Applying ad} X(\alpha_n, z_4) \text{ to the latter expression gives the desired result of } 0. 
\end{align*}

\end{proof}

Finally, we remark that since the above construction of $t(\g)$ contains the associated twisted affine Lie algebra $\hat{\g}$ at the 0-mode, then the preceding theorem gives a bosonic free field realization of the twisted affine Lie algebra of type $D_4^{(3)}$ for the first time (cf. \cite{FF}).

\bibliographystyle{amsalpha}

\end{document}